\documentclass[12pt, draft]{article}
\usepackage{fixmath}

\usepackage[T1]{fontenc}
\usepackage{ucs}
\usepackage[utf8x]{inputenc}
\usepackage[english]{babel}

\usepackage{microtype}
\usepackage{babel}

\usepackage{authblk}

\usepackage{mathtools} 

\usepackage{enumerate}

\usepackage{geometry} 

\usepackage{amsfonts, amsmath, amsthm, amssymb}
\usepackage{mathrsfs}
\geometry{a4paper} 

\usepackage{hyperref} 

\usepackage{tikz}
\usetikzlibrary{shapes}
\tikzstyle{node}=[draw, circle, minimum size=5pt, fill=black, inner sep=0pt]

\newtheorem{theorem}{Theorem}

\newtheorem{lemma}{Lemma}

\theoremstyle{definition}

\theoremstyle{remark}
\newtheorem{remark}{Remark}

\DeclareMathOperator{\GEM}{GEM}

\DeclareMathOperator{\labelset}{\mathfrak L}

\providecommand{\lift}[1]{#1^{\mathsf L}}

\providecommand{\Prob}[1]{\mathbb{P}\left\{#1\right\}}

\providecommand{\card}[1]{\texttt{\#}#1}

\providecommand{\bracket}[1]{\langle#1\rangle}

\newcommand{\res}[2]{#1|_#2}

\title{Lifting linear preferential attachment trees\\ yields the arcsine coalescent}
\author{Helmut H.~Pitters}
\affil{Department of Statistics\\University of California, Berkeley}

\begin{document}


\maketitle


\begin{abstract}
We consider linear preferential attachment trees which are specific scale-free trees also known as (random) plane-oriented recursive trees. Starting with a linear preferential attachment tree of size $n$ we show that repeatedly applying a so-called lifting yields a continuous-time Markov chain on linear preferential attachment trees. Each such tree induces a partition of $\{1, \ldots, n\}$ by placing labels in the same block if and only if they are attached to the same node in the tree. Our main result is that this Markov chain on linear preferential attachment trees induces a partition valued process which is equal in distribution (up to a random time-change) to the arcsine $n$-coalescent, that is the multiple merger coalescent whose $\Lambda$ measure is the arcsine distribution.

\end{abstract}

\section{Introduction}
A linear preferential attachment tree $T$ of size $n$ is a random planar rooted tree on $n$ nodes labeled $1, \ldots, n$ such that the labels along any non-backtracking path starting from the root are increasing. The tree $T$ can be constructed by attaching nodes as they arrive in the order of increasing labels as follows.
\begin{enumerate}
  \item Start with a root node labeled $1.$
  \item At step $n-1$ we have a (random) tree on $n-1$ nodes with labels $1, \ldots, n-1.$ A new node with label $n$ is added to the existing tree, namely it is attached with an edge to node $v$ with probability proportional to
  \[d^+(v)+1,\]
where $d^+(v)$ counts the number of successors of $v$.
There are $d^+(v)+1$ available positions for an additional successor of $v,$ since $T$ is planar. Node $n$ is assigned to one of these positions chosen uniformly at random.
\end{enumerate}

Let us now introduce a continuous-time Markov chain on linear preferential attachment trees. The Markov cain starts in state $T$ and its absorbing state is the tree consisting of only one node. If $T$ has $n>1$ nodes, after waiting an exponential time of rate $n,$ among all nodes of $T$ choose one node $U$ uniformly at random. If $U$ is a leaf, do nothing. Otherwise, select one of $U$s successors, $V$ say, uniformly at random. We now lift the edge $\{U, V\}$ as follows. Collect the labels attached to vertices in the subtree $T_V$ rooted at $V$ and attach them to $U,$ then remove the edge $\{U, V\}$ together with $T_V$. We call this procedure ``lifting'' of an edge, following Berestycki's lecture notes~\cite{Berestycki2009}. However, in the literature variants of this procedure are sometimes called ``cutting'' or ``pruning''. The tree $\lift{T}$ obtained after lifting $T$ is again a linear preferential attachment tree, as we show in Lemma~\ref{lem:lemma1}. Moreover, $\lift{T}$ is labeled by the blocks of some partition $\pi$ of $[n].$ We say that $\pi$ is induced by $\lift{T}$. A moment's thought shows that, in our context, the appropriate way to order the blocks of this partition is according to their least element. Applying this lifting procedure repeatedly yields a Markov chain taking values in the set of linear preferential attachment trees which are labeled by the blocks of partitions of $[n]$. If we start with the linear preferential attachment tree $T$ of size $n$ and only keep track of the partitions of $[n]$ induced by the lifted trees, we obtain a stochastic process with state space the set $\mathscr P_{[n]}$ of partitions of $[n]$ such that the partitions become coarser and coarser as time passes. In fact, starting with a linear preferential attachment tree of size $n$ our main result, Theorem~\ref{thm:lifting_lpats}, shows that by repeated lifting we obtain (up to a random time change) the arcsine $n$-coalescent, which is the multiple merger coalescent corresponding to the measure $\Lambda$ taken to be the arcsine distribution. See section~\ref{sec:main_results} for a definition of multiple merger coalescent processes.

The first construction of a coalescent process by a similar lifting procedure applied to random recursive trees was given by Goldschmidt and Martin~\cite{GoldschmidtMartin2005}. The authors start with a random recursive tree and show that the partition-valued process induced by repeated lifting yields the Bolthausen-Sznitman $n$-coalescent corresponding to the $\Lambda$ measure given by the uniform distribution. Abraham and Delmas give another construction of the beta$(\frac{3}{2}, \frac{1}{2})$ $n$-coalescent by lifting random binary trees in~\cite{AbrahamDelmas2013}, and a construction of the jump chain of the beta$(1+\alpha, 1-\alpha)$ $n$-coalescent by lifting stable Galton-Watson trees in~\cite{AbrahamDelmas2015}. It should however be noted that the lifting procedures employed in these examples differ from each other.

\section{Main Results}\label{sec:main_results}
An \emph{increasing tree} on the labels $1, \ldots, n$ is a rooted tree on $n$ nodes which are labeled by $1, \ldots, n$ such that any sequence of labels along any non-backtracking path starting at the root is increasing. A \emph{plane-oriented recursive tree (PORT)} is a planar increasing tree, i.e.~the successors of any node are ordered.
A PORT on $n$ nodes can be constructed recursively as follows.
\begin{enumerate}
  \item Start with the tree $t_{1}$ consisting only of the root with label $1,$ which trivially is a PORT.
  \item Given a PORT $t_{n-1}$ on $n-1$ nodes pick a node $v$ in $t_{n-1}$ and put a further node with label $n$ into any of the $d^+(v)+1$ positions available at $v.$
\end{enumerate}

Denote by $\card A$ the cardinality of a set $A$. We slightly abuse notation and write $v\in t$ if $v$ is a node of $t$. All $n-1$ nodes of $t_{n-1}$ except for the root are successors of some node, thus there are
\[\sum_{v\in t_{n-1}} (d^{+}(v)+1) = n-2+n-1 = 2n-3=2(n-1)-1\]
PORTs on $n$ nodes that can be constructed by adding node $n$ to $t_{n-1}.$ If $t_n, t_n'$ are PORTs of size $n$ constructed in this way from PORTs $t_{n-1},$ respectively $t_{n-1}',$ of size $n-1,$ then $t_{n-1}\neq t_{n-1}'$ clearly implies $t_n\neq t_n'$. Consequently, letting $\mathfrak P_{n}$ denote the \emph{set of plane-oriented recursive trees on $n$ labeled nodes}, we just showed that $\card\mathfrak P_{n}=(2(n-1)-1)\card\mathfrak P_{n-1},$ hence
\begin{align}\label{eq:ports}
  \card\mathfrak P_{n} = 1\cdot 3\cdots(2(n-1)-1) = (2(n-1)-1)!! = \frac{1}{2^{n-1}}\frac{(2(n-1))!}{(n-1)!}=\frac{n!}{2^{n-1}}C_{n-1},
\end{align}
where for any integer $n\geq -1$ the \emph{double factorial} is defined by
\begin{align}
  n!!\coloneqq \begin{cases}
    1\cdot 3\cdot 5\cdots (n-2)n & \text{if $n$ is odd,}\\
    2\cdot 4\cdot 6\cdots (n-2)n & \text{if $n$ is even,}\\
    1 & \text{if } n\in \{-1, 0\},
  \end{cases}
\end{align}
and $C_n\coloneqq (2n)!/(n!(n+1)!),$ $n\in\mathbb N_0,$ denotes the \emph{$n$th Catalan number}.


A \emph{linear preferential attachment tree (LPAT) of size $n$} is an element $T_{n}$ of $\mathfrak P_{n}$ drawn uniformly at random. At times we write LPAT$(n)$  for ``LPAT of size $n$,'' respectively PORT$(n)$ for ``PORT of size $n$''. Notice that the recursive construction of a PORT$(n)$ immediately yields a recursive construction of an LPAT($n$) $T_n$ by picking a node $v$ in an LPAT($n-1$) with probability proportional to $d^+(v)+1,$ and attaching a new node with label $n$ to $v.$

\begin{remark} (i) Fix two natural numbers $m, n$ with $m\leq n.$ Define the map $\rho_{nm}$ from $\mathfrak P_n$ to $\mathfrak P_m,$ which we call~\emph{restriction}, as follows. If $t_n\in\mathfrak P_n$ is a plane-oriented recursive tree of size $n,$ let $\rho_{nm}(t_n)$ be the subtree in $t_n$ spanned by the nodes whose labels are smaller than or equal to $m.$ If $T_n$ is an LPAT($n$) and $T_m$ is an LPAT($m$), it follows from the recursive construction of linear preferential attachment trees that
\begin{align}
  \rho_{nm}(T_n) =_d T_m.
\end{align}

  (ii) Consider a sequence $\{T_k, 1\leq k\leq n\}$, where $T_k$ is an LPAT of size $k,$ and $T_{k+1}$ is obtained from $T_k$ by the recursive (random) construction we just described. Clearly, $\{T_k\}$ is a Markov chain. Moreover, $T_k$ contains all the information about the past $\{T_1, \ldots, T_{k}\},$ since obviously $T_j,$ $j\leq k,$ is the subtree in $T_k$ spanned by the nodes with labels $1, \ldots, j,$ and $T_k$ is therefore a representation of the $\sigma$-algebra generated by the $T_1, \ldots, T_k.$
\end{remark}

A \emph{partition} of a nonempty set $A$ is a set, $\pi$ say, of nonempty pairwise disjoint subsets of $A$ whose union is $A$. The members of $\pi$ are called the \emph{blocks} of $\pi.$ Let $\mathscr P_A$ denote the set of all partitions of $A$.

In what follows we want somewhat more flexibility in the labeling of trees. Namely, we want to label the nodes in a tree by blocks $B$ of a partition of $[n].$ To this end we endow any partition $\pi$ of $[n]$ by the \emph{order of least elements}, denoted $\leq,$ namely let $B\leq C$ if and only if $\min B\leq \min C$ for any two blocks $B, C$ in $\pi.$ If $t$ is a tree whose nodes are labeled by the blocks in $\pi$ we call $\labelset (t)\coloneqq \pi$ the \emph{label set of $t.$} With this definition a \emph{plane-oriented recursive tree on $\pi=\{B_1, \ldots, B_k\}\in\mathscr P_n,$} PORT($\pi$) for short, respectively a \emph{linear preferential attachment tree on $\pi,$} LPAT($\pi$) for short, is a plane-oriented recursive tree, respectively linear preferential attachment tree on $k$ nodes which are labeled by the blocks $B_1, \ldots, B_k$. Denote by $\mathfrak P_\pi$ the \emph{set of all plane-oriented recursive trees on $\card\pi$ nodes labeled by the blocks $B_1, \ldots, B_k.$}

\begin{remark} (i) Fix two natural numbers $m, n$ such that $m\leq n.$ Fix a partition $\pi\in\mathscr P_n,$ and let $\pi'$ be the restriction of $\pi$ to $[m]$. Moreover, define the map $\rho_{\pi m}\colon \mathfrak P_\pi\to\mathfrak P_\pi'$ as follows. If $T$ is an LPAT($\pi$), consider the subtree $\bar T$ of $T$ spanned by the nodes whose labels (which are subsets of $[n]$) contain an element in $[m].$ Moreover, let $T'$ be obtained from $\bar T$ by restricting the labels in $\bar T$ to $[m],$ i.e.~by replacing any label $B$ of a node $v\in\bar T$ by $B\cap [m].$ We set
\begin{align}
  \rho_{\pi m}(T) \coloneqq T',
\end{align}
and call $T'$ the \emph{restriction of $T$ to $m.$} Notice that, again from the recursive construction of linear preferential attachment trees, for any LPAT($\pi$) $T$ and any LPAT$(\pi')$ $T'$ we have
\begin{align}
  \rho_{\pi m}(T) =_d T'.
\end{align}

  (ii) Notice that in our nomenclature a linear preferential attachment tree of size $n$ is a tree chosen uniformly at random among all plane-oriented recursive trees of size $n.$ In particular, a PORT is always a deterministic tree, whereas for any $n\geq 3$ an LPAT$(n)$ is a random tree. However, in the literature the reader may occasionally find that the term PORT is used for both, deterministic plane-oriented recursive trees as we define them, as well as for linear preferential attachment trees. Other names for LPATs that appear in the literature are heap-ordered trees, nonuniform recursive trees and scale-free trees, cf.~\cite{Hwang2007}.
\end{remark}

We now define the operation of lifting which is at the heart of our construction of the arcsine coalescent. Consider a rooted tree $t$ on $n$ labeled nodes.~\emph{Lifting an edge $e=\{u, v\}$ in $t$} works as follows: Assume that $u$ is closer (in graph distance) to the root than $v.$ Then attach the labels on the subtree $t_v$ rooted at $v$ to $u,$ discard both $t_v$ and $e,$ and only keep track of the subtree containing the root of $t.$ In what follows we will choose the edge that is to be lifted in a particular and random fashion. Namely, we pick a node $U$ in $t$ uniformly at random, and, provided $U$ is not a leaf, we pick one of $U$s successors, call it $V,$ uniformly at random. By \emph{lifting a tree t} we mean picking an edge $\{U, V\}$ randomly in the manner described above and than lifting $\{U, V\}$ in $t$. We denote by $\lift{t}$ the tree that is obtained by lifting $t.$

Our first observation is that if we lift $\{U, V\}$ in an LPAT $T_n$ of size $n$ we obtain an LPAT $\lift{T_n}$ on the new label set $\labelset (\lift{T_n})$.

\begin{lemma}\label{lem:lemma1}
Let $T_{n}$ be an LPAT of size $n$ and let $\lift{T_{n}}$ be the tree obtained by lifting $T_{n}$. Then, conditionally on $\lift{T_{n}}$ having label set $\pi,$ $\lift{T_{n}}$ is an LPAT($\pi$).
\end{lemma}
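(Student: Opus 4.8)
The plan is to show that the unconditional probability $\Prob{\lift{T_n}=t}$ takes the same value for every $t\in\mathfrak P_\pi$; dividing by $\Prob{\labelset(\lift{T_n})=\pi}=\sum_{t\in\mathfrak P_\pi}\Prob{\lift{T_n}=t}$ then yields the uniform law on $\mathfrak P_\pi$, which is precisely an LPAT($\pi$). First I would record the shape of any $\pi$ that can arise from a genuine lift: since lifting merges the labels of the discarded subtree $T_V$ into the single node $U$ and leaves every other label untouched, $\pi$ has exactly one non-singleton block $B^\ast$, namely the block carried by $U$ after the merger. Moreover, because $T_n$ is an increasing tree and $U$ is an ancestor of all of $T_V$, the label of $U$ in $T_n$ is $u=\min B^\ast$, and the discarded subtree $T_V$ is a plane-oriented recursive tree on the label set $S\coloneqq B^\ast\setminus\{u\}$.

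Next I would compute $\Prob{\lift{T_n}=t}$ by summing over all pre-images, i.e.\ all triples $(s,U,V)$ with $s\in\mathfrak P_n$, $U\in s$ and $V$ a successor of $U$, for which lifting the edge $\{U,V\}$ in $s$ produces $t$. Writing $w$ for the node of $t$ labelled $B^\ast$ and $d\coloneqq d^+(w)$ for its out-degree in $t$, such a pre-image $s$ is obtained from $t$ by (a) relabelling $w$ with the singleton $\{u\}$, (b) choosing a PORT $\tau$ on the label set $S$ --- there are $\card\mathfrak P_m$ choices, where $m\coloneqq\card S=\card{B^\ast}-1$ --- and (c) re-attaching the root $V$ of $\tau$ as a new successor of $w$ in one of the $d+1$ available planar positions. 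Each such choice gives a genuine PORT of size $n$ (the increasing property is preserved because every label of $\tau$ exceeds $u$), distinct choices give distinct pre-images, and $U=w$ is forced since $B^\ast$ is the unique non-singleton block. Hence there are exactly $(d+1)\,\card\mathfrak P_m$ pre-images, and in each of them $U=w$ has out-degree $d+1$.

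The crucial cancellation now appears when we weight each pre-image by its probability. Since $T_n$ is uniform on $\mathfrak P_n$, the node $U$ is chosen uniformly among the $n$ nodes, and $V$ uniformly among the $d^+(U)=d+1$ successors of $U$, every pre-image contributes $\frac{1}{\card\mathfrak P_n}\cdot\frac1n\cdot\frac1{d+1}$, so that
\[
  \Prob{\lift{T_n}=t}=(d+1)\,\card\mathfrak P_m\cdot\frac{1}{\card\mathfrak P_n}\cdot\frac1n\cdot\frac1{d+1}=\frac{\card\mathfrak P_m}{n\,\card\mathfrak P_n}.
\]
The out-degree $d$ disappears, so this value is independent of $t$, which is exactly what is needed. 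The main point to get right --- and the only genuinely delicate step --- is the matching of the $d+1$ planar re-insertion positions against the factor $1/d^+(U)$ coming from the uniform choice of the successor $V$; this is the mechanism by which the uniform node-selection in the lifting conspires with the $d^+(v)+1$ attachment weights defining an LPAT to keep the resulting law uniform. The degenerate case in which $U$ is a leaf produces the discrete partition and no merger, and is set aside once $\pi$ is taken to be a proper coarsening.
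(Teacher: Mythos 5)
Your proof is correct and follows essentially the same route as the paper's: both enumerate the $(d+1)\,\card\mathfrak P_{\card{B^\ast}-1}$ pre-images of a fixed lifted tree, weight each by $\frac{1}{\card\mathfrak P_{n}}\cdot\frac{1}{n}\cdot\frac{1}{d+1}$, and observe that the out-degree factor cancels so that $\Prob{\lift{T_{n}}=t}$ depends on $t$ only through its label set. The paper packages the pre-image count as a conditioning on the event that $T_{n}$ ``contains'' $t_{C}$, but the computation and the key cancellation are identical to yours.
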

\begin{proof}
Fix a subset $C\subseteq [n]$ of size $k\coloneqq \card C\geq 2$. Let $t_{C}$ denote an arbitrary but fixed plane-oriented recursive tree of size $n-k+1$ with label set $\bracket{C;n},$ which is the partition of $[n]$ whose blocks consist of the elements in $[n]\setminus C$ and the block $C.$ Then
  \begin{align*}
    \Prob{\lift{T_{n}} = t_{C}\;|\;\lift{T_{n}} \text{ has label set }\bracket{C; n}} &= \frac{\Prob{\lift{T_{n}} = t_{C}}}{\Prob{\lift{T_{n}} \text{ has label set }\bracket{C; n}}}\\
    &= \card\mathfrak P_{n-k+1}^{-1},
  \end{align*}
where the last equality is seen as follows. Slightly abusing notation, we write $d_{t_C}^+(C)$ for the number of successors of the node in $t_C$ that carries label $C.$
By the recursive construction there are $(d_{t_C}^+(C)+1)\card\mathfrak P_{k-1}$ PORTs of size $n$ whose subtree consisting of the first $n-k+1$ nodes agrees with $t_C$ (where we identify the labels $c\coloneqq\min C$ and $C$), each of which is equally likely to be observed under $T_n$, i.e.
  \begin{align}\label{eq:lifting}
    \Prob{\lift{T_{n}} = t_{C}} &= \Prob{\lift{T_{n}} = t_{C}|T_n\text{ contains }t_C}\Prob{T_n\text{ contains }t_C}\\\notag
    &= \frac{1}{n(d_{t_C}^+(C)+1)}\frac{(d_{t_C}^+(C)+1)\card\mathfrak P_{k-1}}{\card\mathfrak P_{n}}=\frac{1}{n}\frac{\card\mathfrak P_{k-1}}{\card\mathfrak P_{n}}.
  \end{align}
To lift the one edge in $T_n$ which connects the nodes with labels $c$ and $c'\coloneqq C\setminus \{c\}$ ($c'$ is the root of the subtree with label set $C\setminus \{c\}$) provided $T_n$ contains $t_C,$ one first has to pick the node with label $c,$ which happens with probability $1/n$
and, conditionally on $c$ being picked, one has to pick the successor of $c$ with label $c',$ which happens with probability $1/(d_{t_C}^+(C)+1)$. Consequently, the probability to pick the one edge in $T_n$ that yields $t_C$ after being lifted is $1/(n(d_{t_C}^+(C)+1))$. It is important to notice that the probability in~\eqref{eq:lifting} only depends on $C$ via $\card C$.

Moreover,
  \begin{align*}
    \Prob{\lift{T_{n}} \text{ has label set }\bracket{C; n}} &= \sum_{t_C'\in \mathfrak P_{\bracket{C; n}}}\Prob{\lift{T_n}=t_C'}\\
    &= \frac{1}{n}\frac{\card\mathfrak P_{n-k+1}\card\mathfrak P_{k-1}}{\card\mathfrak P_{n}}.
  \end{align*}
The claim follows.
\end{proof}
To summarize, starting with an LPAT of size $n$ repeated lifting yields a Markov chain on $\bigcup_{\pi\in\mathscr P_n} \mathcal L_{\pi},$ where $\mathcal L_\pi$ denotes the set of linear preferential attachment trees on $\pi.$
We now modify this lifting process to obtain a continuous-time Markov chain $\{\mathscr T_{n}(t), t\geq 0\}$ as follows. Start with initial state $\mathscr T_{n}(0)=T_{n},$ where $T_n$ is an LPAT of size $n$. If $\mathscr T_{n}$ is in state $T,$ attach an exponential clock to each node $v$ of $T$ that rings at rate $1,$ all clocks being independent and independent of $T$. When the first clock rings, if the corresponding node $V$ is a leaf, do nothing. Otherwise, pick a successor $U$ of $V$ uniformly at random and lift the edge $\{U, V\}$ in $T$ to obtain the next state $\lift{T}$.

Before we state our main result, we recall the notion of multiple merger coalescent processes which were introduced independently by Donnelly and Kurtz~\cite{donnelly1999}, Pitman~\cite{MR1742892} and Sagitov~\cite{MR1742154}. A \emph{(standard) multiple merger $n$-coalescent} $\Pi_{n}$ is a continuous-time Markov chain with state space $\mathscr P_{[n]},$ the set of all partitions of $[n]\coloneqq \{1, \ldots, n\},$ and initial state $\Delta_{n}\coloneqq \{\{1\}, \{2\}, \ldots, \{n\}\}$ such that if $\Pi_{n}$ is in a state of $b$ blocks any $1\leq k\leq b$ specific blocks merge at rate
\begin{align}\label{eq:coalescent_rates}
  \lambda_{b, k}\coloneqq \int_{0}^{1}x^{k-2}(1-x)^{b-k}\Lambda(dx),
\end{align}
where $\Lambda$ is a finite measure on the unit interval. The process $\Pi_n$ is also referred to as the $\Lambda$ $n$-coalescent. The integral formula~\eqref{eq:coalescent_rates} for the transition rates is due to Pitman~\cite{MR1742892} and follows from the requirement that the $\Lambda$ $n$-coalescents be consistent as $n$ varies. Here consistency means that for each $m\leq n$ the restriction of $\Pi_n$ to $[m]$ is equal in distribution to $\Pi_m$. In particular, there exists a process $\Pi\coloneqq \{\Pi(t), t\geq 0\},$ the so-called \emph{$\Lambda$ coalescent}, with state space the partitions of the positive integers $\mathbb N\coloneqq \{1, 2, \ldots \}$ such that for any $n\in\mathbb N$ the restriction of $\Pi$ to $[n]$ is equal in distribution to $\Pi_n$.

For $a, b>0$ the beta$(a, b)$ coalescent is the multiple merger coalescent corresponding to $\Lambda$ the \emph{beta distribution with parameters a, b} on $(0, 1)$ with density
\begin{align}
  x\mapsto\frac{x^{a-1}(1-x)^{b-1}}{B(a, b)}\mathbf{1}_{(0, 1)}(x)\qquad (x\in\mathbb R)
\end{align}
where $B(a, b)\coloneqq \int_{0}^{1}x^{a-1}(1-x)^{b-1}dx=  \Gamma(a)\Gamma(b)/\Gamma(a+b)$ denotes the \emph{beta integral} and for $x>0$ the \emph{gamma function} is defined by $\Gamma(x)\coloneqq \int_0^\infty t^{x-1}e^{-t}dt$. Denoting by $\lambda_{n, k}(a, b)$ the infinitesimal rates of the beta$(a, b)$ coalescent, equation~\eqref{eq:coalescent_rates} implies
\begin{align}\label{eq:coalescent_rates_beta}
  \lambda_{n, k}(a, b) = \frac{B(n-k+b, k-2+a)}{B(a, b)}.
\end{align}

Since the beta distribution with parameters $\frac{1}{2}, \frac{1}{2}$ and density
\begin{align}
  x\mapsto \frac{1}{\pi\sqrt{x(1-x)}}\mathbf{1}_{(0, 1)}(x)\qquad (x\in\mathbb R) 
\end{align}
is the \emph{arcsine law}, we call the corresponding multiple merger coalescent the \emph{arcsine coalescent}. By $\mathbf{1}_A(x)$ we denote the indicator of a set $A$ which equals one if $x\in A$ and zero otherwise. Using $\Gamma(n+\frac{1}{2})=\sqrt{\pi}(2n)!/(4^nn!)$ and~\eqref{eq:coalescent_rates} we find the transition rates of the arcsine coalescent to be given by
\begin{align}\label{eq:arcsine_coalescent_transition_rates}
  \lambda_{n, k}(\frac{1}{2}, \frac{1}{2}) &= \frac{B(k-2+\frac{1}{2}, n-k+\frac{1}{2})}{B(\frac{1}{2}, \frac{1}{2})}=\frac{1}{\pi}\frac{\Gamma(k-2+\frac{1}{2})\Gamma(n-k+\frac{1}{2})}{\Gamma(n-1)}\\\notag
  &= \frac{(2(k-2))!}{4^{k-2}(k-2)!}\frac{(2(n-k))!}{4^{n-k}(n-k)!} = \frac{(k-1)!(n-k+1)!}{4^{n-2}}C_{k-2}C_{n-k}
\end{align}
for $n\geq k\geq 2.$

We now turn to the stochastic process $\Pi'_n$ recording the partitions of $[n]$ induced by the process $\mathscr T_n$ of repeatedly lifting linear preferential attachment trees. More formally, $\Pi'_n\coloneqq \{\Pi'_n(t), t\geq 0\}$ is defined by letting $\Pi'_{n}(t)\coloneqq \labelset(\mathscr T_{n}(t))$. The next theorem shows that up to a random time change the process $\Pi'_{n}$ is the arcsine coalescent restricted to $[n]$.

\begin{theorem}\label{thm:lifting_lpats}
The process $\Pi'_{n}\coloneqq \{\Pi'_{n}(t), t\geq 0\}$ defined by $\Pi'_{n}(t)\coloneqq \labelset(\mathscr T_{n}(t))$ is a continuous-time Markov chain with state space the partitions of $[n],$ initial state $\Delta_{n}$ and absorbing state $\{[n]\}$ such that whenever $\Pi'_{n}$ is in a state of $b$ blocks a merger of $k$ blocks occurs at rate
\begin{align}
  \lambda'_{b,k} = \frac{2^{b-2}(b-2)!}{\card\mathfrak P_{b}}\lambda_{b,k}(\frac{1}{2}, \frac{1}{2}) =\frac{1}{2(b-1)bC_{b-1}}\lambda_{b,k}(\frac{1}{2}, \frac{1}{2}) \qquad (b\geq k\geq 2).
\end{align}
\end{theorem}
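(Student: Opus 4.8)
\section*{Proof proposal}

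The plan is to split the argument into a lumpability step, showing that $\Pi'_{n}$ is a continuous-time Markov chain, and a computational step evaluating the merger rates. For the first step I would record the obvious analogue of Lemma~\ref{lem:lemma1} for a general label set: lifting an LPAT($\pi$) yields, conditionally on its label set $\pi',$ an LPAT($\pi'$). Its proof is verbatim that of Lemma~\ref{lem:lemma1}, since the combinatorics of plane-oriented recursive trees depends only on the order of least elements of the blocks and not on the blocks themselves. Granting this, I would argue by induction over the jump times of $\mathscr T_{n}$ that, conditionally on the partition history $\{\Pi'_{n}(s), s\leq t\},$ the tree $\mathscr T_{n}(t)$ is distributed as an LPAT($\Pi'_{n}(t)$); the inductive step uses that the conditional law after a lift depends on the pre-jump tree only through its being an LPAT of the old partition. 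Because the next lift then depends on $\mathscr T_{n}(t)$ only through this conditional law, the rate of any partition transition out of a state with label set $\pi$ depends on $\pi$ alone, so $\Pi'_{n}$ is Markov. Its initial state is $\Delta_{n}$ (every node of an LPAT of size $n$ carries a singleton) and its only absorbing state is $\{[n]\}$ (a single root is a leaf, so no further lift occurs).

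Next I would identify the transitions. Since labels increase along non-backtracking paths from the root, the parent endpoint of any lifted edge carries the block of smallest least element among the blocks it absorbs. Consequently a single lift merges exactly a set $S=\{B_{s_1}, \ldots, B_{s_k}\}$ of $k\geq 2$ blocks (with $B_{s_1}$ the block of $S$ of smallest least element) if and only if $\{B_{s_2}, \ldots, B_{s_k}\}$ forms a complete subtree rooted at a child of the node carrying $B_{s_1}.$ Every transition is therefore a $k$-fold merger, and it remains to compute the rate $\lambda'_{b,k}$ at which a fixed such $S$ merges when the current state has $b$ blocks.

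For the computation I would use that a node's clock fires at rate $1$ and then selects one of its $d^{+}$ successors uniformly, so the rate of lifting a specific edge $\{V, U\}$ in the current tree is $1/d^{+}(V).$ Conditioning on $\mathscr T_{n}$ being a uniform PORT on the $b$ blocks, I fix a target tree $t^{*}$ on the merged partition $\pi'$ and count the source trees $T$ whose distinguished lift produces $t^{*}$ by inverting the lifting: such $T$ are obtained by reattaching a PORT on the $k-1$ blocks $B_{s_2}, \ldots, B_{s_k}$ as a new child of the node carrying the merged block $C,$ giving $\card\mathfrak P_{k-1}$ choices of subtree and $d^{+}_{t^{*}}(C)+1$ planar insertion slots. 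Since every such source satisfies $d^{+}_{T}(B_{s_1})=d^{+}_{t^{*}}(C)+1,$ the number of insertion slots cancels the lift-rate denominator exactly as in~\eqref{eq:lifting}, so summing $1/\card\mathfrak P_{b}$ times $1/d^{+}_{T}(B_{s_1})$ over these sources gives a contribution $\card\mathfrak P_{k-1}/\card\mathfrak P_{b}$ that is independent of $t^{*}$ and of $S.$ Summing over the $\card\mathfrak P_{b-k+1}$ target trees then yields $\lambda'_{b,k}=\card\mathfrak P_{k-1}\card\mathfrak P_{b-k+1}/\card\mathfrak P_{b}.$

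Finally I would substitute $\card\mathfrak P_{m}=m!\,C_{m-1}/2^{m-1}$ and the explicit form of $\lambda_{b,k}(\frac12, \frac12)$ from~\eqref{eq:arcsine_coalescent_transition_rates} to verify the identity $\lambda'_{b,k}=\frac{2^{b-2}(b-2)!}{\card\mathfrak P_{b}}\lambda_{b,k}(\frac12, \frac12);$ because the prefactor depends only on $b$ and is the same for every $k,$ this exhibits $\Pi'_{n}$ as the arcsine $n$-coalescent run under a state-dependent (random) time change. I expect the main obstacle to be the weighted count of source trees: one must treat the planar insertion positions correctly so that the factor $d^{+}_{t^{*}}(C)+1$ cancels the $1/d^{+}$ coming from the lift, which is precisely the cancellation making $\lambda'_{b,k}$ depend only on $b$ and $k$ (hence exchangeable); the lumpability step is conceptually routine but rests essentially on the conditional uniformity furnished by Lemma~\ref{lem:lemma1}.
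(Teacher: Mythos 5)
Your proposal is correct and follows essentially the same route as the paper: the reduction to a single lifting step via (the general-label-set version of) Lemma~\ref{lem:lemma1}, the count of source trees per fixed target tree with the cancellation of the $d^{+}_{t_C}(C)+1$ insertion slots against the uniform choice of successor, yielding $\lambda'_{b,k}=\card\mathfrak P_{k-1}\card\mathfrak P_{b-k+1}/\card\mathfrak P_{b}$ exactly as in~\eqref{eq:proof_rates}, and the same double-factorial/Gamma-function algebra at the end. Your version merely makes the lumpability induction more explicit than the paper does; no substantive difference.
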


\begin{proof}
Because of Lemma~\ref{lem:lemma1} it suffices to compute the transition rates of $\Pi'_{n}$ in its initial state $\Delta_{n}$. Fix a subset $C\subseteq [n]$ of size $k\coloneqq \card C\geq 2.$ Let $\bracket{C; n}$ denote the partition of $[n]$ consisting of the singletons in $[n]\setminus C$ and the block $C.$ Recall that $\mathscr T_n$ starts in state $\mathscr T_n(0)=T_n,$ where $T_n$ is an LPAT($n$). The rate at which we see a lifted tree $\lift{T_{n}}$ whose label set consists of $C$ and the elements in $[n]\setminus C$ is given by
\begin{align}\label{eq:proof_rates}
  \lambda_{n, k}'(C)= \frac{\card\mathfrak P_{n-k+1}(d_{t_C}^+(C)+1)\card\mathfrak P_{k-1}}{\card\mathfrak P_{n}}\frac{1}{n(d_{t_C}^+(C)+1)}n
  = \frac{\card\mathfrak P_{n-k+1}\card\mathfrak P_{k-1}}{\card\mathfrak P_{n}}.
\end{align}
In particular, $\lambda_{n, k}'$ only depends on $C$ via $\card C=k.$ For this reason we drop the argument $C$ and write $\lambda_{n,k}'$. The first equality in~\eqref{eq:proof_rates} holds since there are $\card\mathfrak P_{n-k+1}$ PORTs on $\bracket{C; n}$, and $\card\mathfrak P_{k-1}$ PORTs on the partition of $C\setminus\{c\}$ into singletons, where $c\coloneqq \min C$. Moreover, any PORT$(n)$ built by choosing an element from each of these sets of trees and joining their nodes labeled $C,$ respectively $c,$ by an edge in $d_{t_C}^+(C)+1$ different ways only yields $\lift{T_{n}}=t_C$ if we lift this particular edge, which happens with probability $1/(n(d_{t_C}^+(C)+1)).$ Finally, the overall rate at which we see an event happen when $\Pi'_n$ is in a state consisting of $n$ blocks is $n,$ hence the last factor.

Recall (cf.~\cite[Exercise 13.1.14, p.~609]{Arfken2001}) that the double factorial with odd arguments can be expressed in terms of the gamma function as
\begin{align}\label{eq:double_factorial_gamma}
  \Gamma(n+\frac{1}{2}) = \sqrt{\pi}\frac{(2n-1)!!}{2^{n}}\qquad (n\in\mathbb N_0).
\end{align}
Use~\eqref{eq:ports} and~\eqref{eq:double_factorial_gamma} to rewrite $\lambda'_{n, k}$ as
\begin{align}
  \lambda'_{n,k} &=\frac{\card\mathfrak P_{n-k+1}\card\mathfrak P_{k-1}}{\card\mathfrak P_{n}}\\\notag
  &= \frac{(2(n-k)-1)!!(2(k-2)-1)!!}{\card\mathfrak P_{n}}\\\notag
  &= \Gamma(n-k+\frac{1}{2})\frac{2^{n-k}}{\sqrt\pi}\Gamma(k-\frac{3}{2})\frac{2^{k-2}}{\sqrt\pi}\frac{1}{\card\mathfrak P_{n}}\\\notag
  &= \frac{2^{n-2}}{\pi}\frac{(n-2)!}{\card\mathfrak P_{n}}\frac{\Gamma(n-k+\frac{1}{2})\Gamma(k-\frac{3}{2})}{\Gamma(n-1)}\\\notag
  &= \frac{2^{n-2}}{\pi}\frac{(n-2)!}{\card\mathfrak P_{n}}B(k-\frac{3}{2}, n-k+\frac{1}{2}),\\\notag
  \intertext{and, recalling the transition rates of the arcsine coalescent in~\eqref{eq:arcsine_coalescent_transition_rates},}
  \lambda_{n,k}' &= \frac{2^{n-2}}{\pi}\frac{(n-2)!}{\card\mathfrak P_{n}}B(\frac{1}{2}, \frac{1}{2})\lambda_{n,k}(\frac{1}{2}, \frac{1}{2})\\\notag
  &= \frac{2^{n-2}(n-2)!}{\card\mathfrak P_{n}}\lambda_{n,k}(\frac{1}{2}, \frac{1}{2})= \frac{(n-1)!(n-2)!}{2(2(n-1))!}\lambda_{n,k}(\frac{1}{2}, \frac{1}{2}).
\end{align}
\end{proof}

We now turn to the process recording the limiting frequency as $n\to\infty$ of the block in $\Pi_n$ that contains $1$.

\section{The block containing $1$}
In order to better understand the behaviour of the block in $\Pi'_n$ that contains the label $1$ we recall the notion of an exchangeable partition and the Chinese Restaurant Process.
A relabeling of a partition $\pi$ of $[n]$ according to some permutation $\sigma$ of $[n]$ is the partition $\sigma\pi$ consisting of the blocks
\[\sigma B\coloneqq \{\sigma(b)\colon b\in B\} \qquad (B\in\pi).\]
A random partition $\bar\Pi$ of $[n]$ is called \emph{exchangeable} if its distribution is invariant under any relabeling, i.e.~if for any permutation $\sigma$ of $[n]$ one has
\[\sigma\bar\Pi=_{d}\bar\Pi.\]
The partitions we have encountered so far that are induced by lifting LPATs are clearly exchangeable. There are various constructions of exchangeable partitions, and the construction via the Chinese Restaurant Process will turn out to be useful for our purposes. To review the Chinese Restaurant Process we closely follow Pitman's lecture notes~\cite{Pitman2006}.

The \emph{Chinese Restaurant Process}, first introduced in Pitman and Dubins~cite{}, is a discrete-time Markov chain whose state at time $m$ is a random permutation $\sigma_{m}$ of $[m].$ The cycles of $\sigma_{m}$ constitute a (random) partition $\bar\Pi_{m}$ of $[m],$ and these random partitions are \emph{consistent} as $m$ varies, that is for each $m$ and $l\leq m$ the restriction $\res{\bar\Pi_{m}}{{[l]}}$ of $\bar\Pi_{m}$ to $[l]$ is equal in distribution to $\bar\Pi_{l}.$ Here the restriction $\res{\pi}{B}$ of any partition $\pi$ of $A$ to a subset $B\subseteq A$ is the partition of $B$ consisting of the non-empty blocks $C\cap B$ where $C$ ranges over all blocks in $\pi.$ Here we focus on a special case of the Chinese Restaurant Process parameterized by a pair of real numbers $(\alpha, \theta)$ such that $0\leq\alpha\leq 1$ and $\theta>-\alpha.$ Picture then a restaurant with an unlimited number of empty tables numbered $1, 2, \ldots,$ each capable of seating an unlimited number of customers. Customers arrive one by one, they are numbered in the order of arrival, and take seats according to the following rule: customer $1$ sits at table $1.$ Suppose $m$ customers already arrived and together occupy the first $k\in\mathbb N$ tables with $m_{i}\geq 1$ customers sitting at table $1\leq i\leq k$. The next customer $m+1$ chooses to sit
\begin{enumerate}
  \item at table $i$ with probability $(m_{i}-\alpha)/(m+\theta),$ where he chooses his left neighbor among the customers at table $i$ uniformly at random,
  \item alone at the $(k+1)$th table with probability $(\theta+k\alpha)/(m+\theta).$
\end{enumerate}
If we assign the integers in $1, \ldots, m$ to the same block according to whether or not the corresponding customers sit at the same table, we obtain a random partition of $[m].$ Denote this partition by $\bar\Pi_{m}.$ By Kingman's theory of exchangeable partitions there exists a partition $\bar\Pi$ of the positive integers $\mathbb N$ such that for any $m\in\mathbb N$ the restriction of $\bar\Pi$ to $[n]$ is equal in distribution to $\bar\Pi_{m}$  and for each block $B\in\bar\Pi$ its \emph{asymptotic frequency}
\begin{align}
  \lim_{m\to\infty}\frac{\card(B\cap [m])}{m}
\end{align}
exists almost surely. Moreover, it is known that these limiting frequencies in size-biased order of least elements have the representation
\begin{align}
  (\tilde P_{1}, \tilde P_{2}, \ldots) =_{d} (W_{1}, \bar W_{1}W_{2}, \bar W_{1}\bar W_{2}W_{3}, \ldots),
\end{align}
where the $(W_{i})_{i\geq 1}$ are independent, $W_{i}$ is governed by a beta$(1-\alpha, \theta+i\alpha)$ distribution, and $\bar W_{i}\coloneqq 1-W_{i}$. The distribution of $(\tilde P_{1}, \tilde P_{2}, \ldots)$ is the so-called \emph{Griffiths-Engen-McCloskey distribution} with parameters $(\alpha, \theta),$ denoted $\GEM(\alpha, \theta).$ The distribution of $(P_{1}, P_{2}, \ldots ),$ defined by ranking the $\tilde P_{1}, \tilde P_{2}, \ldots$ in decreasing order, is the so-called \emph{Poisson-Dirichlet distribution} with parameters $(\alpha, \theta).$

There is a Chinese Restaurant Process sitting in an LPAT$(n)$ that we now turn to. We now study another partition $\pi(t)$ (not to be confused with the label set $\labelset(t)$ of $t$) of $[n]$ induced by a tree $t$ with nodes labeled by a partition $\pi$ of $[n].$ Let $\rho$ denote the root of $t,$ and for any node $v$ in $t$ let $t_v$ denote the subtree in $t$ rooted at $v.$ Define two labels $i,$ $j\in [n]$ to be in the same block of $\pi(t)$ if and only if one of the subtrees of $t$ rooted at a successor of $\rho$ contains both $i$ and $j,$ more precisely, if $i, j\in \{m\colon m\in B\in \labelset(t_v)\}$ for some successor $v$ of $\rho$.

From the recursive construction of LPATs it is immediate that if $T_{n+1}$ is an LPAT of size $n+1$ and $T_{n}$ is an LPAT of size $n,$ the restriction of $\pi(T_{n+1})$ to $[n]$ is equal in distribution to $\pi(T_{n})$ (i.e.~the partitions $\pi(T_{n+1})$ and $\pi(T_{n})$ are said to be consistent).

Suppose then that $T_{n}$ is an LPAT of size $n$ with label set $\Delta_{[n]}$ such that its root has $k$ successors each of which subtends a subtree of size $n_{i}\geq 1,$ in particular $\sum_{i}n_{i}=n.$ In order to obtain $T_{n+1},$ we attach the label $n+1$ to a node $v$ in $T_{n}$ with probability proportional to $d^{+}(v)+1.$ In terms of partitions, the new node $n+1$
\begin{enumerate}
  \item creates a new block w.p.~$\frac{k+1}{2n+1}=\frac{k\frac{1}{2}+\frac{1}{2}}{n+\frac{1}{2}},$
  \item joins a block of size $n_{i}$ w.p.~$\frac{2n_{i}-1}{2n+1}=\frac{n_{i}-\frac{1}{2}}{n+\frac{1}{2}}$.
\end{enumerate}
This shows that $\pi(T_{n})$ has the same distribution as the partition of $[n]$ induced by the $(\frac{1}{2}, \frac{1}{2})$-Chinese Restaurant Process. A more general result revealing the Chinese Restaurant Process in a preferential attachment tree is given in~\cite[Proposition 3]{KubaPanholzer2014} (notice however that Kuba and Panholzer call these trees generalized plane-oriented recursive trees). The basic properties of the Chinese Restaurant Process reviewed earlier imply that as $n\to\infty,$ the asymptotic frequencies $(\tilde P_{1}, \tilde P_{2}, \ldots)$ of the descendencies of the successors of the root (in order of their appearance) follow a $\GEM(\frac{1}{2}, \frac{1}{2})$ distribution.

Consider now the asymptotic frequency
\begin{align}
  F'(t)\coloneqq \lim_{n\to\infty}\frac{\card\{1\leq i\leq n\colon \text{$i$ and $1$ are in the same block of $\Pi'(t)$}\}}{n}
\end{align}
of the block in $\Pi'_{n}(t)$ containing $1.$ The magnitudes of the jumps of $\{F'(t), t\geq 0\}$ are given by $(\tilde P_{i}),$ and the interarrival times between successive jumps form an i.i.d.~sequence of exponential $1$ random variables.
\\

\noindent\textbf{Acknowledgements.} The author thanks Steve Evans, Christina Goldschmidt and Martin M\"{o}hle for comments on an earlier version of this manuscript, as well as James Martin and Jim Pitman for fruitful discussions.

\bibliographystyle{amsplain}
\bibliography{literature}
\end{document}